\newtheorem {thm}{Theorem}
\newtheorem* {thm*}{Theorem}
\newtheorem* {cor*}{Corollary}
\newtheorem {lem}[thm]{Lemma}
\newtheorem {prop}[thm]{Proposition}
\newtheorem {rem}[thm]{Remark}
\theoremstyle{definition}
\newtheorem {exa}[thm]{Example}
\newtheorem* {conj*}{Conjecture}
\DeclareMathOperator{\End}{End}
\DeclareMathOperator{\ord}{ord}
\DeclareMathOperator{\Hom}{Hom}
\DeclareMathOperator{\lcm}{lcm}
\author{Antonella Perucca}
\title{The multilinear support problem for products of abelian varieties and tori}
\date{}
\begin{document}
\maketitle

\begin{abstract}
Let $G$ be the product of an abelian variety and a torus defined over a number field $K$. The aim of this paper is detecting the dependence among some given rational points of $G$ by studying their reductions modulo all primes of $K$. We show that if some simple conditions on the order of the reductions of the points are satisfied then there must be a dependency relation over the ring of $K$-endomorphisms of $G$. We generalize Larsen's result on the support problem to several points on products of abelian varieties and tori.
\end{abstract}

\section{Introduction}

For an algebraic group defined over a number field, it is natural to ask which global properties of the rational points can be detected by studying their reductions. 
In this paper we are concerned with the support problem, which asks to what extent dependency relations (over the endomorphism ring) can be derived from conditions on the order of the reductions of the points. For a detailed history of the support problem and the known results, see \cite{Peruccasupp1} and \cite{DemeyerPerucca}.

The original support problem can be stated as follows: if two rational points $P$ and $Q$ are given, is it true that $Q$ is the image of $P$ via an endomorphism provided that, for all but finitely many reductions, the order of the reduction of $Q$ divides the order of the reduction of $P$?
The answer is in general negative for abelian varieties and for tori, as was shown respectively by Larsen in \cite{Larsen03} and by the author and Demeyer in \cite{DemeyerPerucca}.
Nevertheless, for products of abelian varieties and tori we proved in \cite{Peruccasupp1} that there exist a non-zero integer $c$ and an endomorphism $\phi$ such that $\phi (P)=cQ$ (for abelian varieties this result is due to Larsen, see \cite{Larsen03}).

This last property is false for not necessarily split semi-abelian varieties, as shown to the author by Peter Jossen: the reason is that there are extensions of abelian varieties by split tori of dimension $>1$ which have endomorphism ring $\mathbb Z$.
Thus it is justified that in what follows we restrict our attention to split semi-abelian varieties. 

We study the generalization of the support problem to several points, namely the {\em multilinear support problem}. This variant was first considered by Bara\'nczuk in \cite{Baranczuk06} and \cite{Baranczuk08} and by the author in \cite[Section 5]{Peruccasupp1}. We are able to generalize Larsen's result \cite[Theorem 1]{Larsen03}, thus solving the multilinear support problem for products of abelian varieties and tori:

\begin{thm}\label{bellothm}
Let $G$ be the product of an abelian variety and a torus defined over a number field $K$. Let $P_1,\ldots, P_n, Q_1, \ldots, Q_m$ be points in $G(K)$. Suppose that, for a set of primes $\mathfrak p$ of $K$ of Dirichlet density $1$, the following holds: there exist indexes $i$ and $j$ (depending on the prime $\mathfrak p$) such that 
$$\ord (Q_i \bmod \mathfrak p)\mid  \ord (P_j \bmod \mathfrak p)\,.$$
Then there exist indexes $i$ and $j$ such that $cQ_i=\phi(P_{j})$ for some $K$-endomorphism $\phi$ of $G$ and some non-zero integer $c$.
\end{thm}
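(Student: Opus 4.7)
The plan is to argue by contradiction. Assume that no pair $(i,j)$ admits a relation $cQ_i = \phi(P_j)$ with $c$ a non-zero integer and $\phi \in \End_K(G)$; the goal is then to produce a positive-density set of primes $\mathfrak p$ on which the divisibility $\ord(Q_i \bmod \mathfrak p) \mid \ord(P_j \bmod \mathfrak p)$ fails for \emph{every} pair $(i,j)$ at once, contradicting the hypothesis.

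The starting point is the two-point support theorem for products of abelian varieties and tori proved in \cite{Peruccasupp1}. Applied in contrapositive form, it yields, for each of the finitely many pairs $(i,j)$, a set $S_{i,j}$ of primes of positive density where the divisibility fails. The required contradiction would follow if $\bigcap_{i,j} S_{i,j}$ had positive density, but this is not automatic from set-theoretic pigeonhole on Dirichlet densities. To handle this, I would work one prime $\ell$ at a time: the $\ell$-part of the divisibility condition translates, via Kummer theory for $G$ combined with Chebotarev, into the condition that Frobenius at $\mathfrak p$ lies in a specific open subset of the Galois group of the $\ell$-power division tower built from the $P_j$ and $Q_i$. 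The hypothesis that no endomorphism relation links $Q_i$ and $P_j$ is exactly what forces this open set to carry positive Haar measure, by the same Bashmakov-type input that underlies the two-point theorem.

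The main obstacle is the last step: showing that the ``failure'' open subsets, one per pair $(i,j)$, can be realized \emph{simultaneously} at a Frobenius of positive density. This amounts to analysing the joint image of Galois on the full Kummer tower attached to all of $P_1,\ldots,P_n,Q_1,\ldots,Q_m$ and verifying that the finitely many failure conditions are mutually compatible; the available openness and independence of Kummer images for distinct points (after discarding finitely many exceptional $\ell$) should suffice, but control of the endomorphism ring of a product of an abelian variety and a torus will be needed to rule out degenerate coincidences among the pairs. Once the joint open set is shown to have positive measure, Chebotarev produces the desired primes in $\bigcap_{i,j} S_{i,j}$ and closes the contradiction.
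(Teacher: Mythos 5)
Your outline correctly identifies the shape of the argument (contradiction, Kummer plus Chebotarev, reduction to prime-by-prime control of orders) and even correctly isolates the hard point: you need the $nm$ ``failure'' conditions, one per pair $(i,j)$, to hold at a common positive-density set of primes $\mathfrak p$, and intersecting the sets $S_{i,j}$ is not automatic. But you do not supply the mechanism that actually closes this gap, and the paper's mechanism is not the one you sketch.

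The paper does \emph{not} try to force simultaneity by analysing the joint image of Galois on a single big $\ell$-power Kummer tower. Instead it decouples the conditions combinatorially: choose $n$ \emph{distinct} rational primes $\ell_1,\dots,\ell_n$, one for each $P_j$. For the fixed prime $\ell_j$ one arranges, after the Poincar\'e-reducibility decomposition of $G$ into simple factors and an application of Lemma~\ref{Xa} (a maximal independent subfamily of the coordinates of $(P_j,Q_1,\dots,Q_m)$, together with an explicit relation expressing the rest), a finite set of independent auxiliary points whose $\ell_j$-adic orders can be prescribed; Lemma~\ref{Xb} then forces $\min_i \ord_{\ell_j}(Q_i \bmod \mathfrak p) > \ord_{\ell_j}(P_j \bmod \mathfrak p)$. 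The key point is that this is a condition at the prime $\ell_j$ only, and the Kummer extensions attached to distinct rational primes $\ell_j$ are linearly disjoint because their degrees are powers of distinct primes (Lemma~\ref{newthm7}/Lemma~\ref{XXX}). That is why the $n$ conditions can be imposed simultaneously on a positive density of $\mathfrak p$ with essentially no further work. On such $\mathfrak p$, for \emph{every} pair $(i,j)$ the $\ell_j$-part of $\ord(Q_i)$ strictly exceeds that of $\ord(P_j)$, so the divisibility fails for all pairs at once, and the hypothesis is contradicted. Your plan attempts to get the same conclusion working ``one prime $\ell$ at a time'' but keeps all pairs at the same $\ell$, which is precisely where the joint-image/openness argument you defer to becomes genuinely delicate and is not carried out; the one-prime-per-$P_j$ device is what makes the independence across conditions essentially free. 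Without it, the proposal has a real gap at the step it itself flags as the main obstacle.

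A secondary point: invoking the two-point theorem of \cite{Peruccasupp1} in contrapositive for each pair $(i,j)$ only gives you sets $S_{i,j}$ of positive density, whereas the paper re-derives the failure at a prescribed prime $\ell_j$ from the structural Lemmas~\ref{Xa}--\ref{Xb}, so that the failure comes packaged as an explicit condition on $\ell_j$-adic orders (hence amenable to the linear-disjointness argument). Using the black-boxed two-point theorem does not hand you that control.
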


It is possible to consider points on different algebraic groups, as shown in Section~\ref{proofs}. In particular we obtain the existence of $K$-homomorphisms between the considered algebraic groups.

By Theorem~\ref{plus}, we can weaken the condition of Theorem~\ref{bellothm} as follows: there exists an index $j$ (depending on $\mathfrak p$) such that
$$\gcd \{\ord (Q_i \bmod \mathfrak p)\} \mid \ord (P_j \bmod \mathfrak p)\,.$$
Example~\ref{lcmorder} shows that one cannot similarly weaken the condition of Theorem~\ref{bellothm} as
$$ \ord (Q_j \bmod \mathfrak p)\mid  \lcm \{\ord (P_i \bmod \mathfrak p)\}\,.$$

Nevertheless we prove the following statement, where $\ord_{\ell}$ means the $\ell$-adic valuation of the order:

\begin{thm}\label{minmax}
Let $G$ be the product of an abelian variety and a torus defined over a number field $K$. 
Let $P_1,\ldots, P_{n},Q_1,\ldots, Q_{m}$ be points in $G(K)$.
Let $\ell$ be a rational prime. Suppose that, for a set of primes $\mathfrak p$ of $K$ of Dirichlet density $1$, the following holds:
$$\min_{i=1,\ldots,m}\; \{\ord_\ell(Q_i \bmod \mathfrak p)\}\,\leq \max_{j=1,\ldots,n}\; \{\ord_\ell(P_j \bmod \mathfrak p)\}\,.$$
Then there exists an index $i$ such that a non-zero multiple of $Q_i$ belongs to the $\End_K G$-submodule of $G(K)$ generated by the points $P_1,\ldots, P_n$.
\end{thm}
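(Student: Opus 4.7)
We argue by contradiction: assume that for every $i\in\{1,\ldots,m\}$ no non-zero integer multiple of $Q_i$ lies in the $\End_K(G)$-submodule $M:=\End_K(G)P_1+\cdots+\End_K(G)P_n$. We shall exhibit a set of primes $\mathfrak p$ of positive Dirichlet density for which
\[
\min_i \ord_\ell(\tilde Q_i\bmod\mathfrak p) \;>\; \max_j \ord_\ell(\tilde P_j\bmod\mathfrak p),
\]
contradicting the density-$1$ hypothesis. The concrete target is, on a positive-density family of $\mathfrak p$, the pair of conditions $\ord_\ell(\tilde P_j\bmod\mathfrak p)=0$ for every $j$ together with $\ord_\ell(\tilde Q_i\bmod\mathfrak p)\geq 1$ for every $i$, which yield $\min_i\ord_\ell(\tilde Q_i)\geq 1>0=\max_j\ord_\ell(\tilde P_j)$.

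Fix a large integer $N$, pick $\ell^N$-division points $P_{j,N}\in G(\bar K)$ and $\ell$-division points $Q_{i,1}\in G(\bar K)$, and set $F_N:=K(G[\ell^N],P_{1,N},\ldots,P_{n,N})$. The standard dictionary between reductions and Kummer theory identifies, for $\mathfrak p$ of good reduction not above $\ell$ with $\sigma=\Frob_\mathfrak p$, the $\ell$-primary part of $G(\mathbb F_\mathfrak p)$ with $T_\ell G/(\sigma-1)T_\ell G$, and sends $\ord_\ell(\tilde R\bmod\mathfrak p)$ for $R\in G(K)$ to the order of the image of the Kummer cocycle $\kappa(R)(\sigma)=\sigma R_\infty-R_\infty\in T_\ell G$ in this quotient. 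For a $\sigma\in\Gal(\bar K/F_N)$ further satisfying the \emph{genericity condition} that $(\sigma-1)/\ell^N\in\End_{\mathbb Z_\ell}(T_\ell G)$ is invertible, one has $(\sigma-1)T_\ell G=\ell^N T_\ell G$, so that $\kappa(P_j)(\sigma)\in\ell^N T_\ell G=(\sigma-1)T_\ell G$ for every $j$ (using $\sigma P_{j,N}=P_{j,N}$), giving $\ord_\ell(\tilde P_j)=0$; and in the same situation $\ord_\ell(\tilde Q_i)\geq 1$ is equivalent to $\kappa(Q_i)(\sigma)\notin\ell T_\ell G$, i.e.\ to the mod-$\ell$ residue $\sigma Q_{i,1}-Q_{i,1}\in G[\ell]$ being non-zero.

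The task thus reduces to producing $\sigma\in\Gal(\bar K/F_N)$ simultaneously satisfying the genericity condition and the $m$ conditions $\sigma Q_{i,1}\neq Q_{i,1}$. The genericity condition cuts out an open subset of $\Gal(\bar K/F_N)$ of full Haar measure (by the openness of the $\ell$-adic Galois image for $G$ a product of an abelian variety and a torus). Provided each extension $F_N(Q_{i,1})/F_N$ is non-trivial, $\Gal(\bar K/F_N)$ is not covered by the finitely many proper closed subgroups $\Gal(\bar K/F_N(Q_{i,1}))$, so intersecting these conditions yields a non-empty open coset; Chebotarev then produces a positive-density family of $\mathfrak p$ whose Frobenius falls in this coset, and the contradiction follows.

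The principal obstacle is the \emph{Kummer-independence input}: under the assumption that no non-zero integer multiple of $Q_i$ lies in $M$, one must show $Q_i\notin\ell G(F_N)$ for $N$ sufficiently large, equivalently $F_N(Q_{i,1})\supsetneq F_N$. This is a Bashmakov--Ribet--Bertrand style statement for products of abelian varieties and tori, applied one $Q_i$ at a time: $\End_K(G)$-independence of $Q_i$ from $\{P_j\}$ modulo torsion in $G(K)$ must be translated, via the Kummer pairing and Galois descent from $G(F_N)$, into the non-vanishing of the image of $Q_i$ in $G(F_N)/\ell G(F_N)$ for $N\gg 0$; the multi-point Kummer machinery developed in \cite{Peruccasupp1} supplies exactly this. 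With this input, the Chebotarev step above goes through and the proof is complete.
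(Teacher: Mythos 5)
Your strategy is to derive a contradiction by producing, on a positive-density set of primes, the configuration $\ord_\ell(P_j\bmod\mathfrak p)=0$ for all $j$ and $\ord_\ell(Q_i\bmod\mathfrak p)\geq 1$ for all $i$, and the load-bearing step is the claimed implication
\[
\text{no non-zero multiple of $Q_i$ lies in $M=\sum_j\End_K(G)\,P_j$}\ \Longrightarrow\ Q_i\notin\ell\,G(F_N)\ \text{for $N\gg 0$.}
\]
This implication is false, and the gap is fatal rather than cosmetic. The hypothesis says nothing about divisibility of $Q_i$ inside $G(K)$: one can have $Q_i=\ell\,Q_i'$ with $Q_i'\in G(K)$ independent of the $P_j$ (so no multiple of $Q_i$ is in $M$), and then every $\ell$-division point of $Q_i$ already lies in $F_N\supseteq K(G[\ell])$; the condition $\sigma Q_{i,1}\neq Q_{i,1}$ that you are trying to impose on $\sigma\in\Gal(\bar K/F_N)$ is vacuous. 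The same obstruction occurs whenever $Q_i\in M+\ell G(K)$ without a multiple of $Q_i$ being in $M$, e.g.\ $Q_2=\ell Q_1+P_1$ with $Q_1,P_1$ independent: then $Q_{2,1}\in G(F_N)$ automatically and the criterion can never see $Q_2$. In these situations one still has $\ord_\ell(Q_i\bmod\mathfrak p)=\max(0,\ord_\ell(Q_i'\bmod\mathfrak p)-1)$, so one must prescribe a \emph{higher} value of $\ord_\ell$ along a suitable independent generating set to propagate past the $\ell$-divisibility; the naive target ``$\geq 1$ versus $=0$'' is too coarse.

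This is precisely what the paper's proof does and your proposal omits. After reducing to $n=1$ and to $G=\prod_h B_h$ with $K$-simple pairwise orthogonal factors, Lemma~\ref{Xa} produces a maximal independent family $\{R_j\}_{j\in J}$ from the coordinates of $(P,Q_1,\ldots,Q_m)$ and isogeny relations $cR_i=\sum_{j\in J}\alpha_{ij}R_j$ with a fixed $c$; Lemma~\ref{Xb} then allows one to choose integers $a_j$ for $j\in J\setminus J'$, spaced by more than the maximum $\ell$-adic valuation $d$ of the degrees of the $\alpha_{ij}$, so that prescribing $\ord_\ell(R_j\bmod\mathfrak p)=0$ on $J'$ and $=a_j$ on $J\setminus J'$ forces $\ord_\ell(Q_k\bmod\mathfrak p)>v_\ell(c)+1\geq\ord_\ell(P\bmod\mathfrak p)$ for all $k$; \cite[Proposition 12]{Peruccaord1} realises this on a positive-density set. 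That spacing by $d$ and the threshold $v_\ell(c)+1$ is exactly what is needed to absorb both the $\ell$-divisibility of the $Q_k$ and the isogenies in the relations, and there is no substitute for it in your argument. (A secondary slip: with $(\sigma-1)T_\ell G=\ell^N T_\ell G$, the condition $\ord_\ell(Q_i\bmod\mathfrak p)\geq 1$ is equivalent to $\kappa(Q_i)(\sigma)\notin\ell^N T_\ell G$, not to $\kappa(Q_i)(\sigma)\notin\ell T_\ell G$; the latter is only sufficient. This does not save the proof, but your ``equivalent to'' is incorrect.)
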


This theorem strengthens in several ways the first assertion of \cite[Theorem 1.1]{Baranczuk08} by Bara\'nczuk.

The results in this paper are related to calculating the rank of the Mordell-Weil group of abelian varieties. Indeed, the proofs are based on Chebotarev density theorem so by using effective versions of this theorem it would be possible to check the conditions in the statements only for finitely many reductions (under GRH, see \cite[Proposition 4.1]{Murty}). 
Since we apply Chebotarev's theorem to Kummer extensions, the ramification is controled (\cite[Lemma 5]{Peruccaord2}). However, to estimate the degree of these number fields one should first obtain explicit bounds in results on Kummer theory and the $\ell$-adic representation of abelian varieties (\cite[Theorem 1]{Bertrand} and \cite[Corollaire 1]{Bogomolov}, which are used in the proof of \cite[Theorem 7]{Peruccaord1}).

\section{Preliminaries}

In this section, $K$ will denote a given number field. Let $G$ be the product of an abelian variety and a torus defined over $K$. We say that a point in $G(K)$ is {\em independent} if it generates a free $\End_K G$-submodule of $G(K)$ of rank $1$. 
Let $G_1,\ldots, G_n$ be products of abelian varieties and tori defined over $K$. For every $i=1,\ldots, n$ let $R_i$ be a point in $G_i(K)$. The points $R_1,\ldots, R_n$ are said to be {\em independent} if the point $R=(R_1,\ldots, R_n)$ in $\prod G_i(K)$ is independent. We list some basic properties:

\begin{lem}\label{lemma}
Let $R$ be a point in $G(K)$. \begin{enumerate}

\item The point $R$ is independent if and only if it has a non-zero multiple which is independent. 

\item The point $R$ is independent if and only if the Zariski closure of $\mathbb Z R$ in $G$ is $G$ itself or, equivalently, if the smallest $K$-algebraic subgroup of $G$ containing $R$ is $G$.

\item Suppose that $G$ is $K$-simple (i.e. without proper $K$-algebraic subgroups of positive dimension). If $R$ has infinite order then it is independent.

\item The point $R$ generates a free $\End_K G$-submodule of $G(K)$ of rank $1$ if and only if it generates a free $\End_{\bar{K}} G$-submodule of $G(\bar{K})$ of rank $1$.

\item Let $R_1,\ldots, R_n$ be points in $G(K)$. The points $R_1,\ldots, R_n$ are independent if and only if they generate a free $\End_K G$-submodule of $G(K)$ of rank $n$. 

\item Let $n$ be a positive integer. Up to replacing $K$ by a finite extension, the group $G(K)$ contains at least $n$ independent points.

\item Let $G'$ be the product of an abelian variety and a torus defined over $K$. Let $\alpha$ be a $K$-homomorphism from $G$ to $G'$. If $\alpha(G)=G'$ then $R$ in $G(K)$ is independent only if $\alpha(R)$ in $G'(K)$ is independent. If $\dim G\leq \dim G'$ then $R$ in $G(K)$ is independent if $\alpha(R)$ in $G'(K)$ is independent. In particular, if $\alpha$ is a $K$-isogeny then $R$ in $G(K)$ is independent if and only if $\alpha(R)$ in $G'(K)$ is independent. \end{enumerate}
\end{lem}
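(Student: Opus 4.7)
My plan is to prove part (2) first, since it is the geometric hub from which the bulk of the other statements follow. The easy direction of (2) is formal: if the Zariski closure of $\mathbb Z R$ is $G$, any $\phi \in \End_K G$ with $\phi(R) = 0$ has kernel containing $\mathbb Z R$ and hence all of $G$, so $\phi = 0$. For the converse, suppose the smallest $K$-algebraic subgroup $H$ containing $R$ is proper. Since $G$ is the product of an abelian variety and a torus, $H$ is a sub-semi-abelian variety of the same shape, and both sit in the semisimple isogeny category provided by Poincar\'e complete reducibility for abelian varieties together with splittability of tori. Thus there is a $K$-isogeny $\beta: G \to H \times H'$ with $\dim H' > 0$, and composing $\beta$ with projection to $H'$ and a quasi-inverse of $\beta$ produces a non-zero $K$-endomorphism of $G$ vanishing on $H$ and therefore on $R$. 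The only delicate point is arranging the isogeny complement over $K$, which is standard in this category.

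Parts (3), (4) and (7) follow formally from (2) by tracking Zariski closures: in (3) the closure of $\mathbb Z R$ is a positive-dimensional $K$-subgroup, and $K$-simplicity forces it to be $G$; in (4) the Zariski closure commutes with base change to $\bar K$, so the criterion of (2) transfers; in (7) surjectivity of $\alpha$ sends closures forward, and the dimension constraint $\dim G \leq \dim G'$ pins the closure in $G$ down once $\alpha(R)$ is independent. Part (1) is ring-theoretic: $\End_K G$ is torsion-free as a $\mathbb Z$-module (true for both abelian varieties and tori), so if $\phi(nR) = 0$ then $\phi(R)$ is torsion of some order $m$, hence $m\phi$ annihilates $R$; independence of $R$ gives $m\phi = 0$, and torsion-freeness then forces $\phi = 0$. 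Part (5) amounts to identifying $\End_K G^n$ with the matrix ring $M_n(\End_K G)$ and unpacking the definition of independence row by row.

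The main obstacle I anticipate is part (6). The plan is to apply (2) to $G^n$: it suffices to exhibit, after a finite extension $K'/K$, a single point of $G^n(K')$ whose integer orbit is Zariski dense in $G^n$. Decomposing $G$ up to $K$-isogeny into $K$-simple factors and enlarging $K$ so that each factor acquires enough rational points of infinite order, one then uses (3) and (5) to assemble a point of $G^n(K')$ whose projection to every $K$-simple isogeny factor is of infinite order and in sufficiently general position; enlarging the field further as needed, such a point evades all proper $K'$-algebraic subgroups of $G^n$, so (2) yields independence in $G^n$ and hence, via (5), the desired $n$ independent points in $G(K')$. The technical care needed is precisely in controlling these subgroup evasions while keeping the extension finite.
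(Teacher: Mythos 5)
Most of the proposal is sound and either fills in or parallels what the paper cites as known. For (2) you give an actual argument where the paper refers to \cite{Peruccaord1}; the easy direction and the reduction via Poincar\'e complete reducibility together with the splitting of tori is the right idea. One small imprecision: the smallest $K$-algebraic subgroup $H$ containing $R$ need not be a semi-abelian variety (if $R$ is torsion it is finite), so one should pass to the identity component $H^0$, build a non-zero $\psi\in\End_K G$ killing $H^0$, and then use $\psi\circ[m]$ where $[m]R\in H^0$. Parts (1), (3), (5) are fine; (7) matches the paper's proof almost verbatim. Part (4) is handled differently from the paper, which cites Ribet's Proposition 1.5: you instead apply (2) over both $K$ and $\bar K$ and use that the Zariski closure of $\mathbb Z R$ is Galois-stable and hence descends; this works provided one observes that (2) is valid over an arbitrary base field, and it is a perfectly good alternative route.

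The genuine gap is in (6). Your plan is to produce a point of $G^n(K')$ whose integer orbit is Zariski dense by arranging infinite-order projections to each $K$-simple isogeny factor ``in sufficiently general position'', and you flag ``controlling these subgroup evasions'' as a technicality. It is not a technicality: the proper algebraic subgroups of $G^n$ include the graphs of all homomorphisms between isogenous factors, of which there are infinitely many, and a point whose projections all have infinite order can easily lie on such a graph. The paper's hint encodes the actual argument: $G(\bar K)$ has infinite $\mathbb Z$-rank, and $\End_{\bar K}G$ is a finitely generated $\mathbb Z$-module whose rationalization $\End_{\bar K}G\otimes\mathbb Q$ is a \emph{semisimple} $\mathbb Q$-algebra (again Poincar\'e reducibility plus the torus case). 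Decomposing $G$ up to isogeny as $\prod B_i^{e_i}$ with $B_i$ simple and pairwise non-isogenous, one has $\End^0(G)\cong\prod M_{e_i}(D_i)$ with $D_i$ a division algebra, and each $B_i(\bar K)\otimes\mathbb Q$ is a $D_i$-vector space of infinite dimension; by Morita equivalence $G(\bar K)\otimes\mathbb Q$ is then a free $\End^0(G)$-module of infinite rank. Choose $n$ free generators, clear denominators, and let $K'$ be a finite extension over which they are defined; (4) applied to $G^n$ and then (5) give the $n$ independent points. Without this linear-algebraic input your ``general position'' claim is an unfilled hole rather than a detail to be tidied.
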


\begin{proof} For (2), see \cite[Section 2]{Peruccaord1}; (3) is a consequence of (2); (4) follows from \cite[Proposition 1.5]{Ribet}; (1) and (5) are immediate from the definition of independence; (6) comes from the fact that $G(\bar{K})$ has infinite rank and $\End_{\bar{K}} G$ is a finitely generated $\mathbb Z$-module. 

We prove (7) by using (2). For the first assertion, if the smallest $K$-algebraic subgroup of $G'$ containing $\alpha(R)$ is not $G'$ then the preimage by $\alpha$ of this group is not $G$ and it contains $R$. For the second assertion, if $\alpha(R)$ is independent in $G'$ then the smallest $K$-algebraic subgroup of $G$ containing $R$ has image $G'$ via $\alpha$ so we conclude by comparing its dimension with that of $G$.
\end{proof}

The following Lemma is a refinement of \cite[Lemma 5]{Peruccasupp1}:

\begin{lem}\label{Xa}
Let $I=\{1,\ldots, n\}$. For every $i\in I$ let $B_i$ be either a $K$-simple torus or a $K$-simple abelian variety. Suppose that for $i\neq j$ either $B_i=B_j$ or $\Hom_K(B_i,B_j)=\{0\}$. 
For every $i\in I$ let $R_i$ be a point in $B_i(K)$. Unless all the points $R_i$'s are torsion, there exists a non-empty subset $J$ of $I$ satisfying the following properties: the points $\{R_i\}_{i\in J}$ are independent and if $J'\subseteq I$ contains properly $J$ then the points $\{R_i\}_{i\in J'}$ are not independent; for every $i$ in $I$ we can write
$$cR_i=\sum_{j\in J}\alpha_{ij} R_j$$
where $c$ is a non-zero integer, $\alpha_{ij}$ in $\Hom_K (B_j,B_i)$ and $\alpha_{ij}=0$ whenever $j>i$.
\end{lem}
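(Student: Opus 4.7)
The plan is to reduce the statement to linear algebra over the division algebras attached to each isomorphism type among the $B_i$, and then pick $J$ by a greedy left-to-right selection.

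Partition $I$ into equivalence classes according to the common value $B^{(C)}$ of the $B_i$ in each class $C$. Since $B^{(C)}$ is $K$-simple, $D_C:=\End_K B^{(C)}\otimes\mathbb Q$ is a division algebra. The hypothesis that $\Hom_K(B_i,B_j)=0$ between distinct classes gives, for any $S\subseteq I$, a block decomposition
\[
\End_K\Bigl(\prod_{i\in S}B_i\Bigr)=\prod_C M_{|S\cap I_C|}(\End_K B^{(C)}),
\]
so that, by Lemma~\ref{lemma}(1) and (5), independence of $\{R_i\}_{i\in S}$ is equivalent to the following: for every class $C$, the images $v_i$ of $R_i$ in the left $D_C$-vector space $V_C:=B^{(C)}(K)\otimes_{\mathbb Z}\mathbb Q$ (for $i\in S\cap I_C$) are $D_C$-linearly independent.

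Now pick $J$ greedily: scan $i=1,\ldots,n$ in order and place $i$ in $J$ iff $v_i$ is not a left $D_{C(i)}$-linear combination of the $v_j$ with $j\in J$, $j<i$, $C(j)=C(i)$. The set $J$ is independent by construction. Non-emptiness follows because the smallest index $i_0$ with $R_{i_0}$ of infinite order satisfies $v_{i_0}\neq 0$, so the relevant span is trivial at step $i_0$ and $i_0$ is added to $J$. If $J'\supsetneq J$ and $i\in J'\setminus J$, the greedy failure at step $i$ exhibits $v_i$ as a $D_{C(i)}$-combination of $v_j$ with $j\in J\subseteq J'\setminus\{i\}$, violating independence of $\{R_j\}_{j\in J'}$.

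It remains to record the relations. For $i\in J$ take $c=1$, $\alpha_{ii}=\id_{B_i}$, and $\alpha_{ij}=0$ for $j\neq i$, so the condition $j>i$ is vacuous. For $i\notin J$, the greedy step gives $v_i=\sum_{j\in J\cap I_{C(i)},\,j<i}\mu_j v_j$ with $\mu_j\in D_{C(i)}$; clearing denominators and multiplying by an integer killing the resulting torsion error in $B^{(C(i))}(K)$ (via Lemma~\ref{lemma}(1)) yields an honest relation $cR_i=\sum_{j\in J,\,j<i}\alpha_{ij}R_j$ with $\alpha_{ij}\in\End_K B^{(C(i))}=\Hom_K(B_j,B_i)$ for $j$ in the class of $i$; contributions from $j\in J$ outside the class of $i$ are necessarily zero since $\Hom_K(B_j,B_i)=0$, and the triangularity $\alpha_{ij}=0$ for $j>i$ is built in. The main obstacle, apart from bookkeeping, is to set up the dictionary between the abstract independence of Lemma~\ref{lemma} and genuine left linear independence over the division algebras $D_C$; once that translation is in place, thanks to the $\Hom$-vanishing hypothesis, the rest is ordinary Gaussian elimination performed in the order of the indices.
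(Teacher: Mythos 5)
Your proof is correct, and it takes a genuinely different (and arguably cleaner) route than the paper's. The paper argues by induction on $n$: given the statement for $R_1,\ldots,R_n$, it adjoins $R_{n+1}$ to $J$ if this preserves independence, and otherwise invokes \cite[Lemma~4]{Peruccasupp1} to obtain a relation $\alpha R_{n+1}=\sum_{j\in J}\alpha_{n+1,j}R_j$ with $\alpha\in\End_K(B_{n+1})\setminus\{0\}$, then composes with a quasi-inverse of $\alpha$ to turn $\alpha$ into multiplication by an integer. Your argument unfolds that recursion into an explicit greedy scan and, more importantly, makes the algebraic content of the cited lemma transparent by identifying independence with left linear independence over the division algebras $D_C=\End_K B^{(C)}\otimes\mathbb Q$ acting on $V_C=B^{(C)}(K)\otimes\mathbb Q$. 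This dictionary correctly uses both $K$-simplicity (so that $D_C$ is a division algebra, by Schur) and the $\Hom$-vanishing hypothesis (so that $\End_K(\prod_{i\in S}B_i)$ is block diagonal and no cross-class terms appear). One payoff of your route is that it handles uniformly the case where several initial $R_i$'s are torsion, which the paper's induction formally skips over (the inductive hypothesis does not apply when all of $R_1,\ldots,R_n$ are torsion yet $R_{n+1}$ is not). Two very small points worth tightening: the statement asks for a single integer $c$ working for all $i$, whereas your construction produces one $c_i$ per $i$; you should say explicitly that one takes $c=\lcm_i c_i$. And the citation of Lemma~\ref{lemma}(1) for clearing the torsion discrepancy is not quite the right reference; the step is simply that $cR_i-\sum_j\alpha_{ij}R_j$ is a torsion point and hence killed by its (finite) order, which needs no lemma.
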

\begin{proof} We do induction on $n$. If $n=1$ then the point $R_1$ has infinite order hence by Lemma~\ref{lemma} we have $J=I=\{1\}$. Suppose that we know the result for $n$ and write $I'=I\cup \{n+1\}$ and $J'=J\cup \{n+1\}$. If the points $R_j$ for $j\in J'$ are independent then $J'$ is the maximal subset we are looking for and we can obviously express $c R_{n+1}$ as requested. Now consider the other case, namely that the points $R_j$ for $j\in J'$ are not independent. Since the points $R_j$ for $j\in J$ are independent, there must be a relation of the form 
$$\alpha R_{n+1}=\sum_{j\in J}\alpha_{n+1, j} R_j$$
where $\alpha$ is in $\End_K (B_{n+1})$ and it is non-zero, and where $\alpha_{n+1,j}$ is in $\Hom_K (B_{j}, B_{n+1})$ (see \cite[Lemma 4]{Peruccasupp1}). Since $\alpha$ is an isogeny, let $d$ be its degree. We can replace $c$ so that it is a multiple of $d$. Thus up to composing with an element of $\End_K (B_{n+1})$ we may assume that $\alpha=[c]$ and we can write
$$cR_{n+1}=\sum_{j\in J}\alpha_{n+1,j} R_j\,.$$
This proves the inductive step.
\end{proof}
 
The following Lemma is a refinement of \cite[Corollary 14]{Peruccaord1}:

\begin{lem}\label{Xb}
We keep the notations of Lemma~\ref{Xa}. Suppose that all the points $R_i$'s have infinite order. Let $\ell$ be a prime number and let $m$ be a positive integer. Then, for every $j \in J$, there exists an integer $m_{j}$ such that the following relation holds for the $\ell$-adic valuation of the order:
$$\ord_{\ell}(R_j \bmod \mathfrak p)=m_j\quad\forall j\in J\quad \Rightarrow\quad  \ord_{\ell}(R_i \bmod \mathfrak p)>m\quad\forall i\in I\quad$$
for all but finitely many primes $\mathfrak p$ of $K$.
\end{lem}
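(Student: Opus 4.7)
My plan is to apply Lemma~\ref{Xa} and then choose the $m_j$'s as a rapidly decaying sequence with large gaps. Lemma~\ref{Xa} yields, for each $i \in I$, an expression $cR_i = \sum_{j \in J} \alpha_{ij} R_j$, where each nonzero $\alpha_{ij}$ is forced to be a $K$-isogeny because the $B_i$'s are $K$-simple; let $d_{ij}$ denote its degree. Since each $R_i$ has infinite order, $cR_i \neq 0$, and hence for every $i \in I \setminus J$ at least one $\alpha_{ij}$ is nonzero.

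Let $D$ be a common upper bound for $v_\ell(c)$ and all $v_\ell(d_{ij})$ with $\alpha_{ij} \neq 0$, and enumerate $J = \{j_1 < \cdots < j_t\}$. I would set the $m_j$'s so that $m_{j_k} > m_{j_{k+1}} + D$ for every $k < t$ and $m_{j_t} > m + D$. After excluding the finite set of primes of bad reduction, I consider any $\mathfrak p$ for which $\ord_\ell(R_j \bmod \mathfrak p) = m_j$ for all $j \in J$. For $i \in J$ the conclusion is immediate. For $i \in I \setminus J$, the plan is to take $\ell$-primary parts of the reduction of the relation and isolate the summand indexed by the smallest $k_i$ with $\alpha_{i,j_{k_i}} \neq 0$. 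The estimate $\ord_\ell(\alpha R) \geq \ord_\ell(R) - v_\ell(\deg \alpha)$ for an isogeny $\alpha$ (which follows from the existence of a quasi-inverse $\beta$ with $\beta\alpha = [\deg \alpha]$) ensures that this summand has $\ell$-adic order at least $m_{j_{k_i}} - D$, strictly larger than the $\ell$-adic orders of the other summands by the gap condition. The whole sum then has this same $\ell$-adic order, so $\ord_\ell(cR_i \bmod \mathfrak p) > m$, whence $\ord_\ell(R_i \bmod \mathfrak p) > m$.

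The delicate point is guaranteeing that a single summand dominates $\ell$-adically in the $\ell$-primary decomposition, so that no cancellation can occur. This is what forces the carefully staggered gap structure on the $m_j$'s, and it is really where the argument is packaged; once the bookkeeping is in place the rest is a routine application of the two elementary $\ell$-adic estimates above.
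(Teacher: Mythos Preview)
Your argument is correct and follows essentially the same strategy as the paper's proof: choose the $m_j$'s with gaps exceeding the $\ell$-adic valuations of $c$ and of the isogeny degrees, so that in the relation $cR_i=\sum_{j\in J}\alpha_{ij}R_j$ a single summand dominates $\ell$-adically and determines $\ord_\ell(cR_i\bmod\mathfrak p)$. The only cosmetic difference is the orientation of the staircase: the paper takes the $m_j$ \emph{increasing} in $j$ and isolates the summand with the \emph{largest} index $j$ for which $\alpha_{ij}\neq 0$, whereas you take them decreasing and isolate the smallest such index; both work equally well. One small point of presentation: to bound the non-dominant summands from above you are implicitly using the trivial inequality $\ord_\ell(\alpha R)\le \ord_\ell(R)$ (valid for any homomorphism $\alpha$), which you might want to state alongside the quasi-inverse lower bound, since both are needed for the domination step.
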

\begin{proof} Let $\alpha_{ij}$'s and $c$ be as in Lemma~\ref{Xa}. The maps $\alpha_{ij}$ which are non-zero are isogenies. Call $d$ the maximum of the $\ell$-adic valuation of their degrees.
Without loss of generality suppose that $J=\{1,\ldots, r\}$. Take $m_1$ such that $(m_1-m)>d$ and for every $j=2,\ldots, r$ take $m_j$ such that $(m_j-m_{j-1})>d$.

Let $\mathfrak p$ be a prime of $K$ for which the reductions of the given points are well-defined and suppose that $ord_\ell(R_j \bmod \mathfrak p)=m_j$ for every $j\in J$. 
Up to excluding finitely many primes $\mathfrak p$, \cite[Lemma 3]{Peruccasupp1} implies the following:
$$ord_\ell(R_j \bmod \mathfrak p)-d\leq ord_\ell( \alpha_{ij} R_{j} \bmod \mathfrak p)\leq ord_\ell(R_j \bmod \mathfrak p)$$ whenever $\alpha_{ij}\neq 0$ (in this case $\alpha_{ij}$ is an isogeny).
Consequently, for any $i>r$ we have $$ord_\ell(cR_i \bmod \mathfrak p)=ord_\ell( \alpha_{ij} R_{j} \bmod \mathfrak p)$$ where $j$ is the greatest index such that $\alpha_{ij}\neq 0$ (since $R_i$ has infinite order, not all $\alpha_{ij}$'s are zero). We deduce that $ord_\ell(R_i \bmod \mathfrak p)$ is greater than $m$.
\end{proof}

The following Lemma is a generalization of \cite[Proposition 12]{Peruccaord1}. The improvement is that, when working with different rational primes, we are allowed to change the set of points we are considering.

\begin{lem}\label{XXX}
Let $S$ be a finite set of rational primes. For every $\ell\in S$ let $\{R_j\}_{j\in J_\ell}$ be a set of independent points, such that each point lies in some product of an abelian variety and a torus defined over $K$.
For every $\ell\in S$ and for every $j\in J_{\ell}$ let $a_{j}$ be a non-negative integer. Then there exists a positive density of primes $\mathfrak p$ of $K$ such that the following holds for every $\ell\in S$:
$$\ord_{\ell}(R_j \bmod \mathfrak p)=a_{j}\quad \text{for every $j\in J_\ell$}\,.$$
\end{lem}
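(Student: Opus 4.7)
The plan is to encode, for each $\ell\in S$, the condition $\lord(R_j\bmod\mathfrak p)=a_j$ as a Chebotarev condition on a finite Galois extension of $K$, to apply the single-prime case \cite[Proposition 12]{Peruccaord1} separately to each $\ell$, and then to combine these conditions by a near-linear-disjointness argument.

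For each $\ell\in S$ let $H_\ell$ be the product of the ambient algebraic groups of the points $\{R_j\}_{j\in J_\ell}$, so that the tuple $(R_j)_{j\in J_\ell}$ is an independent point of $H_\ell(K)$. Choose an integer $N_\ell$ strictly larger than every $a_j$ with $j\in J_\ell$ and set
$$L_\ell \;:=\; K\bigl(H_\ell[\ell^{N_\ell}],\; \{\ell^{-N_\ell}R_j:j\in J_\ell\}\bigr),$$
a finite Galois extension of $K$. A standard Kummer-theoretic computation, exactly as in the proof of \cite[Proposition 12]{Peruccaord1}, shows that the condition $\lord(R_j\bmod\mathfrak p)=a_j$ for every $j\in J_\ell$ depends only on the image of $\Frob_{\mathfrak p}$ in $\Gal(L_\ell/K)$, and therefore carves out a conjugation-stable subset $C_\ell\subseteq\Gal(L_\ell/K)$. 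Invoking \cite[Proposition 12]{Peruccaord1} for the single prime $\ell$, the set $C_\ell$ has positive Haar measure in $\Gal(L_\ell/K)$.

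To combine these conditions, let $L$ be the compositum of the fields $L_\ell$ for $\ell\in S$, which is again a finite Galois extension of $K$. The decisive step is to verify that the natural injection
$$\Gal(L/K)\;\hookrightarrow\;\prod_{\ell\in S}\Gal(L_\ell/K)$$
has image of finite index, or equivalently that for each $\ell\in S$ the intersection of $L_\ell$ with the compositum of the remaining $L_{\ell'}$ is a finite extension of $K$. Since $L_\ell$ is built entirely out of $\ell$-primary data (torsion of $H_\ell$ and $\ell$-power division points of the $R_j$), whereas the other $L_{\ell'}$ are built from data primary at different primes, this near-linear-disjointness rests on the essentially independent behaviour of the $\ell$-adic Galois representations on torsion of abelian varieties for distinct $\ell$, together with the analogous independence for Kummer theory (cf.\ \cite[Theorem 1]{Bertrand} and \cite[Corollaire 1]{Bogomolov}).

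Granting the finite-index statement, the preimage of $\prod_{\ell\in S}C_\ell$ in $\Gal(L/K)$ is a non-empty union of conjugacy classes of positive Haar measure, and Chebotarev's density theorem for the extension $L/K$ produces the required positive density of primes $\mathfrak p$ of $K$ that simultaneously satisfy $\Frob_{\mathfrak p}|_{L_\ell}\in C_\ell$ for every $\ell\in S$. The principal obstacle is thus the near-linear-disjointness of the $L_\ell$; once this is in hand, the rest of the argument is a routine combination of Kummer theory, Proposition 12 at each single prime, and Chebotarev.
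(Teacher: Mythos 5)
Your overall strategy (Chebotarev applied to Kummer-type extensions, one block per prime $\ell$, then combine) is the same one the paper follows, and the hard part you identify -- controlling the interaction of the extensions $L_\ell$ for different $\ell$ -- is indeed the crux. However, your reduction to ``the image of $\Gal(L/K)\hookrightarrow\prod_\ell\Gal(L_\ell/K)$ has finite index'' is not strong enough, and your stated reformulation (``the intersection of $L_\ell$ with the compositum of the other $L_{\ell'}$ is a \emph{finite} extension of $K$'') is vacuous, since each $L_\ell$ is already finite over $K$. The real issue is this: all the groups here are finite, so every subgroup has finite index; what you actually need is that the image $H\subset\prod_\ell\Gal(L_\ell/K)$ meets $\prod_\ell C_\ell$. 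Finite index does not give this -- for instance, if $\Gal(L_1/K)=\Gal(L_2/K)=\mathbb Z/2$, $C_1=\{1\}$, $C_2=\{0\}$, and $H$ is the diagonal, then $H\cap(C_1\times C_2)=\emptyset$ even though $H$ has index $2$. Moreover, the surjectivity you would need can genuinely fail at the level of the torsion fields $K(H_\ell[\ell^{N_\ell}])$ for different $\ell$, which may share nontrivial subextensions (classical example: mod-$2$ and mod-$3$ torsion fields of an elliptic curve may both contain the same quadratic field), and the appeal to independence of $\ell$-adic representations \`a la Serre only controls this up to a finite error that your argument does not track against the sets $C_\ell$.

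The paper avoids this obstruction by a simple preliminary step you skipped. It first picks torsion points $T_j$ of order $\ell^{a_j}$ and a finite Galois extension $F/K$ where all of them are rational, so that the condition $\ord_\ell(R_j\bmod\mathfrak p)=a_j$ becomes $\ord_\ell\bigl((R_j-T_j)\bmod\mathfrak q\bigr)=0$ for $\mathfrak q\mid\mathfrak p$ in $F$, i.e.\ a condition with target valuation $0$. Once one is working over the fixed base $F$, the only Galois data left for each $\ell$ is the $\ell$-power Kummer extension of $F$ attached to the twisted independent point $R_\ell$, and these Kummer extensions have degree a power of $\ell$ over $F$. For distinct $\ell\in S$ they therefore have coprime degrees, hence are genuinely linearly disjoint over $F$, and $\Gal$ of their compositum \emph{is} the full product. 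That gives the surjectivity required to intersect the positive-measure Chebotarev conditions for each $\ell$, which is exactly the content of Lemma~\ref{newthm7} (adapting \cite[Theorem 7]{Peruccaord1}). So: your plan is right in spirit, but you should first ``absorb'' the target exponents $a_j$ by a torsion twist and pass to a common base field $F$; the coprime-degree observation over $F$ then does cleanly what the heavy machinery of adelic open image cannot be straightforwardly made to do in your setup.
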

\begin{proof}
Let $R_j$ belong to $G_{j}(K)$. For every $\ell\in S$ and for every $j\in J_\ell$, let $T_{j}$ be a point of order $\ell^{a_{j}}$ which lies in $G_{j}(\bar{K})$. Let $F$ be a finite Galois extension of $K$ where all these torsion points are defined. 
We are interested in finding primes $\mathfrak p$ of $K$ such that for some prime $\mathfrak q$ of $F$ over $\mathfrak p$ we have for every $\ell\in S$:
$$\ord_{\ell}(R_j-T_{j} \bmod \mathfrak q)=0\quad \text{for every $j\in J_\ell$}\,.$$
To study this property, we can clearly consider the corresponding point $R_{\ell}$ in $\prod_{j\in J_{\ell}} G_{j}(F)$ and require that 
$$\ord_{\ell}(R_{\ell} \bmod \mathfrak q)=0\,.$$
Since the point $R_\ell$ is independent for every $\ell\in S$, we conclude by applying Lemma~\ref{newthm7}.
\end{proof}

\begin{lem}\label{newthm7}
Let $S$ be a finite set of rational primes. For every $\ell\in S$, let $G_\ell$ be the product of an abelian variety and a torus defined over $K$. Let $F$ be a finite extension of $K$. For every $\ell\in S$ let $R_\ell$ be an $F$-rational point on $G_\ell$ such that the Zariski closure of $R_{\ell}$ in $G_{\ell}$ is connected.
There exists a positive Dirichlet density of primes $\mathfrak p$ of $K$ such that the following holds: for some prime $\mathfrak q$ of $F$ over $\mathfrak p$ the order of $(R_{\ell} \bmod \mathfrak q)$ is coprime to $\ell$ for every $\ell\in S$.
\end{lem}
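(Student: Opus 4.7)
The plan is to apply Chebotarev's density theorem to a compositum of Kummer-type Galois extensions, one for each $\ell\in S$. First, using the connectedness hypothesis and Lemma~\ref{lemma}(2), I replace each $G_\ell$ by the Zariski closure of $\mathbb{Z}R_\ell$: since $\Hom_{\bar K}(A,T)=\Hom_{\bar K}(T,A)=0$ between abelian varieties and tori, any connected algebraic subgroup of $G_\ell$ is itself a product of an abelian variety and a torus, so the hypotheses are preserved and $R_\ell$ becomes independent in the new $G_\ell$. Replacing $F$ by its Galois closure over $K$ only helps (a prime of the larger $F$ restricts to one of the original), so I may also assume $F/K$ is Galois.

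For each $\ell\in S$ fix a compatible system $(S_n^{(\ell)})_n$ of points with $\ell^n S_n^{(\ell)}=R_\ell$ in $G_\ell(\bar K)$ and set $F_n^{(\ell)}:=F(G_\ell[\ell^n],S_n^{(\ell)})$, which is Galois over $F$ since its $F$-conjugates of $S_n^{(\ell)}$ differ from $S_n^{(\ell)}$ by elements of $G_\ell[\ell^n]\subseteq F_n^{(\ell)}$. A direct Kummer-theoretic calculation shows that $\ord_\ell(R_\ell\bmod\mathfrak q)=0$ if and only if, at every level $n$ and for $\sigma:=\Frob_\mathfrak q\in\Gal(F_n^{(\ell)}/F)$, the Kummer cocycle value $\sigma(S_n^{(\ell)})-S_n^{(\ell)}$ lies in $(\sigma-1)G_\ell[\ell^n]$; the subset of $\Gal(F_n^{(\ell)}/F)$ satisfying this is conjugation-stable. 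By the Bogomolov--Bertrand openness results (\cite[Theorem 1]{Bertrand}, \cite[Corollaire 1]{Bogomolov}), applied to the now-independent point $R_\ell$, the image of $\Gal(F_n^{(\ell)}/F)$ in $\mathrm{Aut}(G_\ell[\ell^n])\ltimes G_\ell[\ell^n]$ is large enough that the good subset has Haar density bounded below uniformly in $n$. A convenient sufficient condition is that $\sigma-1$ be invertible on $G_\ell[\ell^n]$: for the abelian factor such $\sigma$ form a positive-density set by openness of the $\ell$-adic representation, and for the toric factor one uses that $\mu_{\ell^\infty}\not\subseteq F$.

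To combine the single-$\ell$ conditions, I exploit that each $F_\infty^{(\ell)}/F$ is, outside a bounded finite subextension, pro-$\ell$; hence the extensions $F_n^{(\ell)}/F$ for distinct $\ell\in S$ are pairwise linearly disjoint modulo a deficit bounded independently of $n$, and $\Gal(L_n/F)$ with $L_n:=\prod_{\ell\in S}F_n^{(\ell)}$ differs from $\prod_{\ell\in S}\Gal(F_n^{(\ell)}/F)$ only by a factor of bounded order. The simultaneous good subset therefore has positive Haar density and is conjugation-stable. Applying Chebotarev's density theorem to the Galois closure of $L_n/K$, together with the routine decomposition-group translation that converts a $\Gal(L_n/F)$-conjugacy-stable Frobenius condition into a density statement on primes of $K$ admitting a suitable $\mathfrak q\mid\mathfrak p$ in $F$, yields a positive Dirichlet density of primes $\mathfrak p$ of $K$ such that some $\mathfrak q\mid\mathfrak p$ satisfies $\ord_\ell(R_\ell\bmod\mathfrak q)=0$ for every $\ell\in S$. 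The main technical obstacle is the uniform-in-$n$ density bound of the second paragraph: this is exactly where the non-triviality of the combined $\ell$-adic and Kummer representations is needed, and finitely many exceptional primes $\ell$ may have to be absorbed by a preliminary finite extension of $F$.
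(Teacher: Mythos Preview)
Your proposal is correct and follows essentially the same approach as the paper: the paper's proof is a one-line reference to \cite[Theorem~7]{Peruccaord1}, together with the single observation that the Kummer extensions attached to the various $R_\ell$ have $\ell$-power degree and are therefore linearly disjoint for distinct $\ell\in S$. What you have written is a fleshed-out version of that adaptation (reduction to independent points via the connectedness hypothesis, Kummer/torsion towers, Bogomolov--Bertrand input, Chebotarev), with the linear-disjointness step appearing in your third paragraph; the only cosmetic point is that your closing remark about ``finitely many exceptional $\ell$'' is unnecessary here since $S$ is finite and $F$ is a number field.
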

\begin{proof}
We claim that it is straight-forward to adapt the proof of \cite[Theorem 7]{Peruccaord1}: indeed, the Kummer fields that we consider for the different points $R_{\ell}$ have degree a power of $\ell$ and so they are linearly disjoint.
\end{proof}

\section{Proof of the results}\label{proofs}

In this section, we prove the results mentioned in the Introduction. The following statement implies Theorem~\ref{bellothm}.

\begin{thm}\label{plus}
Let $G$ be the product of an abelian variety and a torus defined over a number field $K$. 
Let $P_1,\ldots, P_{n},Q_1,\ldots, Q_{m}$ be points in $G(K)$. Let $S$ be a set of rational primes with cardinality $n$. Suppose that, for a set of primes $\mathfrak p$ of $K$ of Dirichlet density $1$, there exists an index $j$ (depending on $\mathfrak p$) such that the following holds: for every $\ell\in S$ we have
$$\min_{i=1,\ldots, m} \{\ord_{\ell}(Q_i \bmod \mathfrak p)\} \leq \ord_{\ell}(P_j \bmod \mathfrak p)\,.$$
Then there exist indexes $i$ and $j$ such that $cQ_i=\phi(P_{j})$ for some $\phi\in \End_K G$ and for some non-zero integer $c$.
\end{thm}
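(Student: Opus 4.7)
Assume for contradiction that for no indices $i,j$, no $c\in\mathbb Z\setminus\{0\}$ and no $\phi\in\End_K G$ does $cQ_i=\phi(P_j)$ hold. Every $Q_i$ must have infinite order, for otherwise $c=\ord(Q_i)$ and $\phi=0$ would witness such a relation; any torsion $P_j$ has uniformly bounded reductions and can be discarded, so I assume each $P_j$ is also non-torsion. Label $S=\{\ell_1,\ldots,\ell_n\}$ and associate $\ell_j$ to $P_j$. The goal is to produce a positive Dirichlet density of primes $\mathfrak p$ at which
$$\min_i\ord_{\ell_j}(Q_i\bmod\mathfrak p)\;>\;\ord_{\ell_j}(P_j\bmod\mathfrak p)\quad\text{for every }j\in\{1,\ldots,n\},$$
thereby preventing any single $j$ from witnessing the hypothesis of the theorem at $\mathfrak p$ and contradicting the density-$1$ assumption.

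Via Poincar\'e reducibility and Lemma~\ref{lemma}(7), I reduce to $G=\prod_k B_k^{d_k}$ with the $B_k$ pairwise non-isogenous $K$-simple; the bounded isogeny-induced shift in $\ell$-adic orders from \cite[Lemma 3]{Peruccasupp1} is absorbed into the prescriptions. Fix $j$, decompose each of $P_j,Q_1,\ldots,Q_m$ into its $B_k$-components, and apply Lemma~\ref{Xa} to the full collection of these components, ordered so that every $P_j$-component precedes every $Q_i$-component. The resulting maximal independent subset $\mathcal R_j$ splits as $\mathcal R_j^P\sqcup\mathcal R_j^Q$ according to whether its elements originate from $P_j$ or from the $Q_i$'s; by the ordering, every $P_j$-component is expressible in terms of $\mathcal R_j^P$ alone. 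If for some $i$ all components of $Q_i$ were likewise expressible in $\mathcal R_j^P$ alone, then chaining these relations with the $P_j$-expressions and invoking the identifications $\End_K(B_k^{d_k})=M_{d_k}(\End_K B_k)$ and $\End_K G=\prod_k\End_K(B_k^{d_k})$ would yield a global relation $cQ_i=\phi(P_j)$, contradicting our standing assumption. Hence for every $i$ some element of $\mathcal R_j^Q$ appears with non-zero coefficient in the expression of at least one component of $Q_i$.

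Lemma~\ref{Xb} now lets me prescribe small $\ell_j$-adic orders on $\mathcal R_j^P$ and arbitrarily large ones on $\mathcal R_j^Q$, forcing $\ord_{\ell_j}(P_j\bmod\mathfrak p)$ to stay bounded while each $\ord_{\ell_j}(Q_i\bmod\mathfrak p)$ exceeds any preassigned threshold (the $\ell$-adic order of a point in a product being the maximum of those of its components). Because $\ell_1,\ldots,\ell_n$ are distinct, the corresponding Kummer extensions are linearly disjoint and Lemma~\ref{XXX} splices the $n$ prescriptions into a single positive Dirichlet density of primes $\mathfrak p$ at which all the inequalities of the first paragraph hold simultaneously, producing the contradiction. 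The main technical obstacle is the penultimate sentence of the second paragraph: transforming the ring-theoretic non-existence of $(c,\phi)$ with $cQ_i=\phi(P_j)$ into the structural assertion that $\mathcal R_j^Q$ genuinely supports each $Q_i$, by reassembling partial componentwise relations across all isotypic blocks $B_k^{d_k}$ into one global endomorphism of $G$ with a single common integer $c$.
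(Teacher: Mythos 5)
Your proposal is correct and follows essentially the same route as the paper's proof: reduce via Poincar\'e reducibility, assign each $P_j$ a distinct prime $\ell_j\in S$, apply Lemma~\ref{Xa} to the component decomposition with $P_j$-components listed first, use Lemma~\ref{Xb} to force $\ord_{\ell_j}(Q_i)$ above $\ord_{\ell_j}(P_j)$ whenever $Q_i$ truly depends on a $Q$-component of the independent set, and then invoke Lemma~\ref{XXX} (exploiting linear disjointness of the Kummer towers at distinct $\ell$) to realize all $n$ prescriptions at a positive density of primes simultaneously. The only point to tighten is that a torsion $P_j$ cannot simply be ``discarded'': one must also replace each $Q_i$ by $\ord(P_j)\cdot Q_i$ so that the density-one hypothesis survives at primes $\mathfrak p$ whose witnessing index was $j$, which is the paper's preprocessing step; apart from this small omission the argument is sound, and the ``main technical obstacle'' you flag is indeed disposed of exactly as you indicate, by assembling the componentwise relations (with the single integer $c$ from Lemma~\ref{Xa}) into one $\phi\in\End_K G$.
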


We can reformulate Theorems~\ref{plus}~and~\ref{minmax} for points not necessarily on the same algebraic group. The only difference would be that we would not get $K$-endomorphisms, but $K$-homomorphisms between the algebraic groups. To reduce to the known case simply consider the product of the given algebraic groups and replace each point by its image with respect to the inclusion in the product.

\begin{rem}
Let $a$ be a positive integer. In Theorem~\ref{minmax} it suffices to require
$$\min_{i=1,\ldots,m}\; \{\ord_\ell(Q_i \bmod \mathfrak p)\}\,\leq \max_{i=1,\ldots,n}\; \{\ord_\ell(P_i \bmod \mathfrak p)\}+ a\,.$$
\end{rem}
\begin{proof}
We can multiply the points $Q_i$'s by $\ell^{a}$ to reduce to the case $a=0$.
\end{proof}

Notice that one could state a similar remark for  Theorem~\ref{plus}. The following Example shows a problem which arises by weakening the hypotheses of Theorem~\ref{bellothm}.

\begin{exa}\label{lcmorder}
Let $G$ be the product of an abelian variety and a torus defined over a number field $K$. Suppose that $G(K)$ contains points $P_1,P_2$ which are independent, and define $Q=P_1+P_2$. 
The following condition is then satisfied for all but finitely many primes $\mathfrak p$ of $K$: the order of $(Q \bmod \mathfrak p)$ divides the least common multiple of the orders of $(P_1 \bmod \mathfrak p)$ and of $(P_2 \bmod \mathfrak p)$.
However, no multiple of $Q$ is the image of $P_1$ or of $P_2$ by a $K$-endomorphism of $G$. By Lemma~\ref{lemma}, up to a finite extension of the field $K$, we can construct such an example for every $G$.
\end{exa}

\textit{Proof of Theorem~\ref{minmax}.}
\noindent \emph{Step 1.}
We may suppose that $n=1$. Indeed, by replacing $G$ by $G^n$ and the point $Q_i$ by $(Q_i,0,\ldots, 0)$ for $i=1,\ldots, m$, it suffices to prove the statement for the points $P; Q_1,\ldots, Q_m$ where $P=(P_1,\ldots, P_n)$.

It suffices to prove Theorem~\ref{minmax} in the case $G=\prod_{h=1}^e B_h$, where for every $h$ the factor $B_h$ is a $K$-simple abelian variety or a $K$-simple torus and for $h'\neq h$ either $B_{h'}=B_h$ or $\Hom_K(B_{h'},B_h)=\{0\}$. Indeed, by the Poincar\'e Reducibility Theorem, $G$ is $K$-isogenous to such a product, so we can reason as in \cite[Lemma 7]{Peruccasupp1}.

We may assume that $P$ has infinite order because otherwise \cite[Corollary 14]{Peruccaord1} and the condition in the statement imply that some of the points $Q_i$'s is torsion so the theorem holds.

\noindent \emph{Step 2.} 
Define points $R_i$ for $i=1,\ldots, (1+m)e$ as follows:
$$(P,Q_1,\ldots,Q_m)=(R_1,\ldots,R_e; R_{e+1},\ldots, R_{2e}; R_{me+1},\ldots, R_{(1+m)e})\,.$$

Thus the point $R_i$ belongs to $B_h(K)$ if $i\equiv h (\bmod\,e)$.
Apply Lemma~\ref{Xa} to the family $\{R_i\}$. Let $J$ and $c$ be as in the Lemma, and write $J'=J\cap \{1,\ldots, e\}$. 
So we can write
$$cR_i=\sum_{J'}\alpha_{ij} R_j + \sum_{J \setminus J'}\alpha_{ij} R_j$$
where the points $R_j$ for $j\in J$ are independent, and the second sum is zero whenever $i\leq e$.
Notice that $J'\neq\emptyset$ because $P$ is not torsion. We may also suppose $J' \neq J$ because otherwise the statement is clear.

\noindent \emph{Step 3.} 
If $b=v_{\ell}(c)+1$ then we have the following: 
$$\ord_{\ell}(R_j \bmod \mathfrak p)=0\quad \forall j \in J'\; \Rightarrow\; \ord_{\ell}(P \bmod \mathfrak p)<b\,.$$

For every $i>e$, write
$$R_i'=\sum_{J \setminus J'}\alpha_{ij} R_j\,.$$
The points $R_i'$ are either zero or have infinite order because the points $\{R_j\}_{j\in J}$ are independent. We now apply Lemma~\ref{Xb}. Then, for $j\in J\setminus J'$, there exist integers $a_j$ such that the following holds:
$$\ord_{\ell}(R_j \bmod \mathfrak p)=a_j\quad \forall j \in J\setminus J'\; \Rightarrow\; \ord_{\ell}(R_i' \bmod \mathfrak p)>b$$
for every $i$ such that $R_i'$ is non-zero.

Putting things together, if some prime $\mathfrak p$ of $K$ satisfies:
$$\ord_{\ell}(R_j \bmod \mathfrak p)=0\quad \forall j \in J'\quad \text{and}\quad \ord_{\ell}(R_j \bmod \mathfrak p)=a_j\quad \forall j \in J\setminus J'$$
then $$\ord_{\ell}(R_i \bmod \mathfrak p)> b > \ord_{\ell}(P \bmod \mathfrak p)$$
for every point $R_i$ such that $R_i'$ is non-zero.

\noindent \emph{Step 4.} 
Suppose that the statement is false, and so that no point $Q_k$ has a multiple which belongs to the $\End_K G$-submodule of $G(K)$ generated by $P$. Then for every $k=1,\ldots,m$ there exists a point $R_i$ with $ke< i\leq (k+1)e$ such that $R_i'$ is non-zero. In particular, for the primes $\mathfrak p$ considered in the third step we must have 
$$\min_{k=1,\ldots, m} \{\ord_{\ell}(Q_k \bmod \mathfrak p)\}> b > \ord_{\ell}(P \bmod \mathfrak p)\,.$$
By \cite[Proposition 12]{Peruccaord1}, since the points $\{R_j\}_{j\in J}$ are independent, there exists a positive density of primes $\mathfrak p$ of $K$ as requested. Then we contradicted the condition in the statement.
\hfill $\square$
\newline

The following Proposition strengthens assertions contained in \cite[Theorem 1.1]{Baranczuk08}.

\begin{prop}\label{strongbaru=1} We keep the notations of Theorem~\ref{minmax} and suppose that the condition holds for every rational prime $\ell$.
\begin{enumerate}
\item Let $m=1$ (write $Q_1=Q$). If the points $P_1,\ldots,P_n$ are independent then the $\End_K G$-submodule of $G(K)$ that they generate contains $Q$. 
\item Let $n=1$ (write $P_1=P$). If the points $Q_1,\ldots, Q_m$ are independent then exactly one of them is the image of $P$ via a $K$-endomorphism of $G$.
\end{enumerate}
\end{prop}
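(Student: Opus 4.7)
The plan for Part (1) is to apply Theorem~\ref{minmax} to the point $Q$ and the points $P_1,\ldots,P_n$, obtaining a positive integer $c$ with $cQ\in N$ where $N:=\sum_j(\End_K G)P_j$, and then to force the minimal such $c$ to equal~$1$ by exploiting that the hypothesis now holds for every rational prime~$\ell$. Assume for contradiction that the minimal $c$ is $>1$; choose a prime $\ell\mid c$ and set $Q^*=(c/\ell)Q$. By minimality $Q^*\notin N$, while $\ell Q^*\in N$. Since multiplication by an integer cannot increase the $\ell$-adic order of a reduction, the hypothesis for $\ell$ transfers to $Q^*$: $\ord_\ell(Q^*\bmod\mathfrak p)\le\max_j\ord_\ell(P_j\bmod\mathfrak p)$ on a density-$1$ set of primes. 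To derive a contradiction I would seek a positive density of primes $\mathfrak p$ with $\ord_\ell(P_j\bmod\mathfrak p)=0$ for every $j$ and $\ord_\ell(Q^*\bmod\mathfrak p)\ge 1$.

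Writing $\ell Q^*=\sum_j\phi_j P_j$ with $\phi_j\in\End_K G$, I would distinguish two subcases. If every $\phi_j\in\ell\End_K G$, writing $\phi_j=\ell\psi_j$ gives $Q^*=\sum_j\psi_j P_j+T$ with $T\in G(K)[\ell]$, and necessarily $T\ne 0$ (otherwise $Q^*\in N$); Lemma~\ref{XXX} applied with $S=\{\ell\}$ and all $a_j=0$ yields a positive density of primes $\mathfrak p$ with $\ord_\ell(P_j\bmod\mathfrak p)=0$ for every~$j$, and on the intersection with the (density-$1$) set where $T$ reduces to a point of order $\ell$ one gets $\ord_\ell(Q^*\bmod\mathfrak p)=1>0$, contradicting the transferred hypothesis. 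If instead some $\phi_{j_0}\notin\ell\End_K G$, I would fix $\ell$-th roots $R_j,R^*\in G(\bar K)$ of $P_j$ and $Q^*$, set $L=K(G[\ell],R_1,\ldots,R_n)$, and argue via Kummer theory that $R^*\notin L$, so that $L(R^*)/L$ is a non-trivial $\ell$-Kummer extension. By Chebotarev a positive density of primes $\mathfrak p$ then has $\Frob_{\mathfrak p}$ fixing every $R_j$ but not $R^*$, which translates by the standard correspondence between the Galois action on $\ell$-th division points and $\ell$-adic orders of reductions (cf.\ Lemma~\ref{newthm7}) into $\ord_\ell(P_j\bmod\mathfrak p)=0$ for every~$j$ together with $\ord_\ell(Q^*\bmod\mathfrak p)\ge 1$, again contradicting the hypothesis.

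Part~(2) reduces to Part~(1). For uniqueness, if $Q_i=\phi(P)$ and $Q_j=\psi(P)$ with $i\ne j$, then both $Q_i$ and $Q_j$ lie in the cyclic $\End_K G$-module $(\End_K G)P$, of $(\End_K G)\otimes\mathbb Q$-rank at most~$1$; this contradicts the independence of $\{Q_k\}$, which by Lemma~\ref{lemma}(5) makes $(\End_K G)Q_i\oplus(\End_K G)Q_j$ of rank~$2$. For existence, Theorem~\ref{minmax} yields an index $i$ together with a nonzero $c\in\mathbb Z$ and $\phi\in\End_K G$ such that $cQ_i=\phi(P)$. A Zariski-closure argument based on Lemma~\ref{lemma}(2) then forces $P$ itself to be independent: the closure of $\mathbb Z Q_i$ is $G$, hence that of its finite-index subgroup $c\mathbb Z Q_i=\phi(\mathbb Z P)$ is $G$ too, and being contained in $\phi(H)$ with $H=\overline{\mathbb Z P}$ this gives $\phi(H)=G$; comparing dimensions forces $H=G$. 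Applying Part~(1) to the single point $Q_i$ and $P_1=P$ now gives $Q_i\in(\End_K G)P$, producing $\phi'\in\End_K G$ with $Q_i=\phi'(P)$.

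The hard part will be the Kummer-theoretic claim $R^*\notin L$ in the second subcase of Part~(1), which amounts to the nonzero class of $\phi_{j_0}$ in $\End_K G/\ell\End_K G$ genuinely detecting a new Kummer class, i.e.\ obstructing $Q^*$ from being an $\ell$-th multiple inside $G(L)$. This relies on the independence of $\{P_j\}$ over $\End_K G$ propagating to independence modulo $\ell$ in the Kummer quotient $G(L)/\ell G(L)$, which is standard for products of abelian varieties and tori but needs care at small primes $\ell$ where the $\ell$-adic Galois representation on $G$ may fail to be maximal—one expects to invoke the Kummer-theoretic input already used in the proof of \cite[Theorem 7]{Peruccaord1} (see also the references to \cite{Bertrand,Bogomolov} in the Introduction).
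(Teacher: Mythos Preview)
Your Part~(1) takes a different route from the paper, which simply observes that the diagonal point $P=(P_1,\ldots,P_n)\in G^n(K)$ is independent and then invokes \cite[Corollary~8 and Proposition~9]{Peruccasupp1} for the pair $(P,Q)$. Your direct argument is essentially an attempt to reprove those cited results; your first subcase is fine, and you correctly identify that the second subcase hinges on a Kummer-theoretic statement (independence of the $P_j$ over $\End_K G$ propagating to independence of their Kummer classes modulo~$\ell$) which is precisely the nontrivial input packaged in the cited references.

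Your Part~(2), however, has a genuine gap. After obtaining $cQ_i=\phi(P)$ and showing $P$ is independent, you apply Part~(1) to the pair $(Q_i,P)$. But the hypothesis of Part~(1) for this pair would require, for \emph{every} prime~$\ell$, that $\ord_\ell(Q_i\bmod\mathfrak p)\le\ord_\ell(P\bmod\mathfrak p)$ on a density-$1$ set. The given hypothesis only says $\min_k\ord_\ell(Q_k\bmod\mathfrak p)\le\ord_\ell(P\bmod\mathfrak p)$, and the minimum may be attained at a different index $k$ for each~$\mathfrak p$. From $cQ_i=\phi(P)$ you do get $\ord_\ell(Q_i)\le\ord_\ell(P)$ for primes $\ell\nmid c$, but your own Part~(1) argument only uses the hypothesis at primes $\ell\mid c$, exactly where this fails. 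So the reduction does not go through.

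The paper closes this gap by a different mechanism: it first shows (via Lemma~\ref{lemma}(7)) that $P,Q_2,\ldots,Q_m$ are independent, and then, for a prime $\ell\mid c$, uses \cite[Proposition~11]{Peruccaord1} to produce a positive density of $\mathfrak p$ with $\ord_\ell(P-T\bmod\mathfrak p)=0$ and $\ord_\ell(Q_k\bmod\mathfrak p)\ge 2$ for every $k>1$. On such primes the minimum in the hypothesis is forced to be $\ord_\ell(Q_1\bmod\mathfrak p)$, which must then be~$\le 1$; running over all $T\in G[\ell]$ gives $G[\ell]\subset\ker\phi$, hence $\phi=\ell\phi'$, and a second application with $\ord_\ell(P)=0$, $\ord_\ell(Q_k)\ge 1$ for $k>1$ kills the residual torsion. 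The essential idea you are missing is to use the independence of $P$ from the \emph{remaining} $Q_k$'s to push their $\ell$-orders up and thereby isolate $Q_i$ in the minimum.
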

\begin{proof}
\emph{(1)} The point $P=(P_1,\ldots, P_n)$ in $\prod G_i(K)$ is independent, so it suffices to apply \cite[Corollary 8 and Proposition 9]{Peruccasupp1} to $P$ and $Q$.

\emph{(2)} Without loss of generality, by Theorem~\ref{minmax} we have $cQ_1=\phi(P)$ for some $\phi$ in $\End_K G$ and for some positive integer $c$. Since the points $cQ_1, Q_2,\ldots, Q_n$ are independent then by (7) of Lemma~\ref{lemma} the points $P,Q_2,\ldots, Q_m$ are also independent. Suppose that $c>1$ is minimal and that $\ell$ is a prime factor of $c$. We first show that $[\ell]$ divides $\phi$ in $\End_K G$ by showing the containment of the kernels.  Let $T$ be a non-zero point in $G[\ell]$. To ease notations, suppose that $T\in G(K)$. Consider the positive density of primes $\mathfrak p$ such that $\ord_{\ell}(P-T \bmod \mathfrak p)=0$ and $\ord_{\ell}(Q_i \bmod \mathfrak p)=2$ for every $i>1$ (\cite[Proposition 11]{Peruccaord1}). Taking into account $Q_1$ and since $\ell\mid c$, the condition in the statement forces $\phi(T)=0$. Thus we can write $(c/\ell) Q_1=\phi'(P)+T'$ where $\phi'$ is in $\End_K G$ and $T'$ is in $G[\ell]$. We deduce that $T'=0$ by similarly considering primes $\mathfrak p$ such that $\ord_{\ell}(P \bmod \mathfrak p)=0$ and $\ord_{\ell}(Q_i \bmod \mathfrak p)=1$ for every $i>1$. This contradicts the minimality of $c$. The unicity in the statement is a consequence of the fact that the points $Q_i$'s are independent. 
\end{proof}

Notice that the condition of independence in Proposition~\ref{strongbaru=1} cannot be removed in general (not even if $n=m=1$) because otherwise there would be a contradiction with \cite[Proposition 2]{Larsen03}.\\

\textit{Proof of Theorem~\ref{plus}.}
Analogously to what done for Theorem~\ref{minmax}, we may assume that $G=\prod_{h=1}^e B_h$, where for every $h$ the factor $B_h$ is a $K$-simple abelian variety or a $K$-simple torus and for $h'\neq h$ either $B_{h'}=B_h$ or $\Hom_K(B_{h'},B_h)=\{0\}$.

We may assume that not all points $P_i$'s are torsion because otherwise \cite[Corollary 14]{Peruccaord1} and the condition in the statement imply that some of the points $Q_i$'s is torsion so the theorem holds. We may then suppose that every point $P_i$ has infinite order: without loss of generality, if $P_1$ is a torsion point we can remove it, up to replacing each $Q_i$ by $\ord (P_1)\cdot Q_i$.

For every point $P_i$ choose a different prime $\ell$ of $S$ and write $P_i=P_\ell$. Suppose that no point $Q_i$ has a multiple which belongs to the $\End_K G$-submodule of $G(K)$ generated by $P_\ell$. Then, by following Steps 2, 3 and 4 of the proof of Theorem~\ref{minmax} (where $P=P_\ell$), we find a set  $\{R_j\}_{j\in J_\ell}$ of independent points such that, by properly prescribing the values of $\ord_{\ell}(R_j \bmod \mathfrak p)$ for every $j\in J_\ell$, we get:
$$\min_{i=1,\ldots, m} \{\ord_{\ell}(Q_i \bmod \mathfrak p)\} > \ord_{\ell}(P_\ell \bmod \mathfrak p)\,.$$
By applying Lemma~\ref{XXX}, we find a positive density of primes $\mathfrak p$ of $K$ such that the above inequality holds for every $\ell\in S$. This contradicts the condition in the statement.
\hfill $\square$ \newline

\section*{Acknowledgements}
We thank Peter Jossen for useful comments and for the counterexample on the support problem for semi-abelian varieties which we mentioned in the Introduction.

\vspace{0.6cm}

\noindent Postdoctoral Fellow of the Research Foundation - Flanders (FWO)

\noindent \textit{E-mail}: antonellaperucca@gmail.com
\end{document}